\providecommand{\U}[1]{\protect\rule{.1in}{.1in}}
\providecommand{\U}[1]{\protect\rule{.1in}{.1in}}
\newtheorem{theorem}{Theorem}[section]
\newtheorem{remark}[theorem]{Remark}
\numberwithin{equation}{section}
\begin{document}
\setcounter{page}{1}

\title[Spaceability of the set of bounded linear non--absolutely summing operators]{Spaceability of the set of bounded linear non-absolutely summing operators in Quasi-Banach sequence spaces}

\author[Daniel Tomaz]{Daniel Tomaz}

\address{Department of Mathematics, Federal University of Para\'{i}ba, 	58.051-900 - Jo\~{a}o Pessoa, Brazil}
\email{\textcolor[rgb]{0.00,0.00,0.84}{danieltomazmatufpb@gmail.com}}


\keywords{Lineability, spaceability, absolutely summing operators, quasi-Banach spaces}
\thanks{Daniel Tomaz is supported by Capes}
\subjclass[2010]{Primary 46A16; Secondary 46A45.}

\begin{abstract}
In the short note we prove  that  for every $0<p<1$, there exists an infinite dimensional closed linear subspace of $\mathcal{L}\left(  \ell_{p};\ell_{p}\right) $ every nonzero element of which is non $(r,s)$-absolutely summing operator for the real numbers $r,s$ with $1\leq s\leq r<\infty$. This improve a result obtained in \cite{DanielT}.   \\
\end{abstract} \maketitle

\section{Introduction}
In the last decade many authors have been searching for large linear structures of mathematical objects enjoying certain special properties. These notions of lineability/spaceability has been investigated in several contexts, for instance, Functional Analysis, Measure Theory, Probability Theory, Set Theory, etc.
\\If $E$ is a vector space, a subset $A$ of $E$ is said to be \emph{lineable} if  $A\cup\left\{  0\right\}  $ contains a infinite dimensional linear subspace of $E$. Moreover, if $E$ is a topological vector space, a subset $A$ is said \emph{spaceable} if $A\cup\left\{  0\right\}  $ contains a closed infinite dimensional linear subspace of $E$. If $\alpha$ is a cardinal number, a subset $A$ of $E$ is called  $\alpha$-\emph{lineable} (\emph{spaceable}) if $A\cup\left\{  0\right\}  $ contains a (closed) $\alpha$-dimensional linear subspace of $E$. \\These definitions were introduced by Aron, Gurariy and Seoane-Sep\'{u}lveda in the classical references \cite{Aron} and \cite{Quarta}, considered as the founding pillars of the theory of lineability. See also, for instance,  the recent papers \cite{Bernal,Pellegrino1,Pellegrino2,BCFP_LAA,cariellojfa,VMS}. We refer also the recent monograph \cite{book}, where many examples can be found and techniques are developed in several different frameworks.

\subsection{Notation}
Let us now fix some notation. Let $E,F$  be Banach  or quasi-Banach spaces over the scalar field $\mathbb{K}$, which can be either $\mathbb{R}$ or $\mathbb{C}$. The space of absolutely $(r,s)$-summing linear operators from $E$ to $F$  will be represented by
$\prod\nolimits_{(r,s)}\left(  E;F\right)  $ and the space of bounded linear
operators from $E$ to $F$  will be denoted by $\mathcal{L}\left(E;F\right)  $.
\\Recall that an linear operator $T:E\rightarrow F$ is absolutely
$\left(  r,s\right)  $-summing if $\sum_{k}\left\Vert T\left(  x_{k}\right)
\right\Vert ^{r}<\infty$ whenever $\left(  x_{k}\right)  _{k=1}^{\infty}$ is a
sequence in $E$ such that $\sum_{k}\left\vert f\left(  x_{k}\right)
\right\vert ^{s}<\infty$ for each $f\in E^{\prime}$, where $E^{\prime}$ denote the topological dual of $E$. \\The basics of the linear theory of absolutely summing operators can be found in the classical book \cite{Diestel}.
If $E$ is a Banach or quasi-Banach space, we denote by  \\ $ \ell_{p}^{w}\left(  E\right)  =\left\{  \left(  x_{j}\right)  _{j=1}^{\infty
}\in E^{\mathbb{N}};\text{ }%
{\textstyle\sum\limits_{j=1}^{\infty}}
\left\vert \phi\left(  x_{j}\right)  \right\vert ^{p}<\infty,\text{ }%
\forall\phi\in E^{\prime}\right\}$ the space of weakly $p$-summable $E$-valued sequences and by $ \ell_{p}\left(  E\right)  =\left\{  \left(  x_{j}\right)  _{j=1}^{\infty}\in
E^{\mathbb{N}};\text{ }%
{\textstyle\sum\limits_{j=1}^{\infty}}
\left\Vert x_{j}\right\Vert ^{p}<\infty\right\}$ the space of absolutely  $p$-summable $E$-valued sequences. We will denote by $\mathfrak{c}$ the cardinality of the continuum.

If $0<p<1$, the sequences spaces $\ell_{p}$ are quasi-Banach spaces ($p$-Banach space) with quasi-norms given by 
\[
\left\Vert x\right\Vert_{\ell_{p}}=\left(  \sum\limits_{k=1}^{\infty}\left\vert
x_{k}\right\vert ^{p}\right)  ^{\frac{1}{p}}.
\] 
\\The behavior of quasi-Banach spaces or, more generally, metrizable complete
topological vector spaces, called $F$-spaces is sometimes quite different
from the behavior of Banach spaces. Besides, the search for closed infinite dimensional subspaces of quasi-Banach spaces is a quite delicate issue . Thus, it seems interesting to look for lineability and spaceability techniques that also cover the case of quasi-Banach spaces. For more details on quasi-Banach spaces we refer to \cite{book3}.
The aim of this paper is to prove the spaceability of the  set of bounded linear non-absolutely summing operators in quasi-Banach sequence spaces. To be more precise, let us to prove that  $\mathcal{L}\left(  \ell_{p};\ell_{p}\right)  \diagdown\bigcup\limits_{1\leq
	s\leq r<\infty}%
{\textstyle\prod\nolimits_{\left(  r,s\right)  }}
\left(  \ell_{p};\ell_{p}\right)  $ is $\mathfrak{c}$-spaceable for every
$0<p<1$, improving a result that  was proved in \cite{DanielT}.
\section{preliminaries}
In this section, we will consider  some common tools in the related results to the lineability/spaceability.
Let us split $\mathbb{N}$ into countably many infinite pairwise disjoint
subsets $\left(  \mathbb{N}_{k}\right)  _{k=1}^{\infty}$. For each integer
$k\in\mathbb{N},$ write%
\[
\mathbb{N}_{k}=\left\{  n_{1}^{\left(  k\right)  }<n_{2}^{\left(  k\right)
}<\cdots\right\}  .
\]
Define 
\[
\ell_{p}^{\left(  k\right)  }:=\left\{  x\in\ell_{p}:\text{ }x_{j}=0\text{ if
}j\notin\mathbb{N}_{k}\right\}  .
\]
On the other hand, since $\mathbb{N=}\left\{  n_{m}^{\left(  j\right)
}:j,m\in\mathbb{N}\right\}  ,$ consider the sequence of linear operators%
\[
i^{\left(  k\right)  }:\ell_{p}\longrightarrow\ell_{p}^{\left(  k\right)  }%
\]
given by%
\[
\left(  i^{\left(  k\right)  }\left(  x\right)  \right)  _{n_{m}^{\left(
		j\right)  }}=\left\{
\begin{array}
[c]{c}%
x_{m},\text{ if }j=k,\\
0,\text{ if }j\neq k
\end{array}
\right.
\]
for all $x=\left(  x_{m}\right)  _{m=1}^{\infty}\in\ell_{p}$. Note that%
\[
\left\Vert i^{\left(  k\right)  }\left(  x\right)  \right\Vert _{\ell
	_{p}^{\left(  k\right)  }}=\left\Vert i\left(  x\right)  \right\Vert
_{\ell_{p}}%
\]
where $i:\ell_{p}\longrightarrow\ell_{p}$ is the identity map. Now, for each
$k\in\mathbb{N}$, consider the sequence $\left(  u_{k}\right)  _{k=1}^{\infty
}$ in $\mathcal{L}\left(  \ell_{p};\ell_{p}\right)  $ defined of the form%
\[
u_{k}:\ell_{p}\overset{i^{\left(  k\right)  }}{\longrightarrow}\ell
_{p}^{\left(  k\right)  }\overset{j_{k}}{\longrightarrow}\ell_{p},
\]
with $j_{k}:\ell_{p}^{\left(  k\right)  }\longrightarrow\ell_{p}$ is the
inclusion operator.  Moreover, notice that
\begin{equation}
\left\Vert i^{\left(  k\right)  }\left(  x\right)  \right\Vert _{\ell
	_{p}^{\left(  k\right)  }}=\left\Vert i\left(  x\right)  \right\Vert \label{rr}
_{\ell_{p}}=\left\Vert u_{k}\left(  x\right)  \right\Vert _{\ell_{p}}
\end{equation}
for all $k.$

\begin{theorem}
	\label{t1} (\cite[Theorem 4]{Maddox}) Let $0<p<1$ and $1\leq s\leq r<\infty$.
	Then the identity map $i:\ell_{p}\longrightarrow\ell_{p}$ is non $\left(
	r,s\right)  $-absolutely summing.
\end{theorem}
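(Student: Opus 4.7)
The strategy is to exhibit a sequence $(x^{(k)})_{k=1}^\infty\subset\ell_p$ that is weakly $s$-summable (i.e.\ $\sum_k|\phi(x^{(k)})|^s<\infty$ for every $\phi\in\ell_p'$) but satisfies $\sum_k\|x^{(k)}\|_{\ell_p}^r=\infty$. Witnessing such a sequence shows that the identity sends a weakly $s$-summable sequence to a sequence that is not strongly $r$-summable, hence $i:\ell_p\to\ell_p$ fails to be $(r,s)$-absolutely summing.

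\textbf{Step 1 (Identify the dual).} For $0<p<1$ one has $\ell_p'\cong\ell_\infty$ via $\phi\leftrightarrow(\phi(e_j))_j$, because $\ell_p\subset\ell_1$ with $\|x\|_1\le\|x\|_{\ell_p}$, so every $\alpha\in\ell_\infty$ defines a continuous functional $x\mapsto\sum\alpha_jx_j$ on $\ell_p$, and conversely $|\phi(e_j)|\le\|\phi\|$. Consequently, weak $s$-summability of $(x^{(k)})$ reduces to the single quantitative condition
\[
\sup_{\|\alpha\|_\infty\le 1}\sum_{k}\Bigl|\sum_{j}\alpha_j\,x^{(k)}_j\Bigr|^{s}<\infty.
\]

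\textbf{Step 2 (Block construction).} Split $\mathbb N$ into pairwise disjoint finite blocks $B_k$ with $|B_k|=N_k$, and set $x^{(k)}=\lambda_k\sum_{j\in B_k}e_j$ for positive scalars $\lambda_k,N_k$ to be chosen. Then a direct computation gives
\[
\|x^{(k)}\|_{\ell_p}=\lambda_k N_k^{1/p},\qquad |\phi(x^{(k)})|\le\lambda_k N_k\|\alpha\|_\infty\quad(\phi\leftrightarrow\alpha\in\ell_\infty).
\]
So the proof reduces to choosing sequences $(\lambda_k),(N_k)$ such that
\[
\sum_{k}(\lambda_k N_k)^{s}<\infty\qquad\text{and}\qquad\sum_{k}\lambda_k^{r}N_k^{r/p}=\infty.
\]

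\textbf{Step 3 (Choice of parameters).} Writing $a_k:=\lambda_k N_k$, the second sum becomes $\sum_k a_k^{r}N_k^{r(1/p-1)}$, and the exponent $r(1/p-1)$ is strictly positive precisely because $p<1$. Take for instance $a_k=k^{-2/s}$ so that $\sum a_k^{s}<\infty$; then pick $N_k$ growing fast enough that $a_k^{r}N_k^{r(1/p-1)}\ge k^{-1}$, e.g.\ $N_k\ge k^{(2r/s-1)p/(r(1-p))}$. This forces $\sum\|x^{(k)}\|_{\ell_p}^{r}=\infty$ while the weak $s$-summability holds uniformly in $\alpha$.

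\textbf{Main obstacle.} The only real subtlety is the asymmetry between the two conditions: both $\|x^{(k)}\|_{\ell_p}^r$ and $|\phi(x^{(k)})|^s$ are powers of $\lambda_k$ times powers of $N_k$, and one must open a gap between the $N_k$-exponents. That gap is exactly $r(1/p-1)>0$, which arises because in $\ell_p$ for $p<1$ the norm of the characteristic function of an $N$-element set is $N^{1/p}$ rather than $N$. Once this quasi-Banach phenomenon is exploited, the construction is essentially forced; the argument would collapse if $p\ge 1$, consistent with the fact that the identity $\ell_1\to\ell_1$ is $(1,1)$-summing by Grothendieck--Pietsch.
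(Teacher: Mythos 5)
The paper does not actually prove this statement; it is quoted verbatim from Maddox (\cite[Theorem 4]{Maddox}), so there is no internal proof to compare against. Your self-contained argument is correct: the block vectors $x^{(k)}=\lambda_k\sum_{j\in B_k}e_j$ satisfy $\|x^{(k)}\|_{\ell_p}=\lambda_kN_k^{1/p}$ while every functional (coming from $\ell_\infty$ via the inclusion $\ell_p\subset\ell_1$) sees only $\lambda_kN_k$, and the exponent gap $r(1/p-1)>0$ lets you make $\sum_k\|x^{(k)}\|_{\ell_p}^r$ diverge while $\sum_k|\phi(x^{(k)})|^s$ converges uniformly over $\|\phi\|\le1$; your parameter choice $a_k=k^{-2/s}$, $N_k\ge k^{(2r/s-1)p/(r(1-p))}$ (rounded up to an integer) does the job since $2r/s-1\ge1>0$. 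This is essentially the classical mechanism behind Maddox's result, and it is the same identification $(\ell_p)'=\ell_\infty$ that the paper itself invokes later. One small correction to your closing remark: the identity on $\ell_1$ is \emph{not} $(1,1)$-summing --- by the Dvoretzky--Rogers theorem the identity of an infinite-dimensional Banach space is never $(1,1)$-summing; what is true is that $id_{\ell_1}$ is $(2,1)$-summing (Orlicz/Grothendieck). What genuinely collapses at $p=1$ is only your particular block construction, not the non-summability of the identity. This side comment plays no role in the proof, which stands as written.
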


\begin{remark}
It is straightforward consequence of \eqref{rr} and the previous theorem that for each $k\in\mathbb{N}$, the operator $u_{k}:\ell_{p}\longrightarrow\ell_{p}$ is non $\left(
r,s\right)  $-absolutely summing regardless of the real numbers $r,s$, with
$1\leq s\leq r<\infty$.
\end{remark}

\section{The main result}
\begin{theorem}
	$\mathcal{L}\left(  \ell_{p};\ell_{p}\right)  \diagdown\bigcup\limits_{1\leq
		s\leq r<\infty}%
	{\textstyle\prod\nolimits_{\left(  r,s\right)  }}
	\left(  \ell_{p};\ell_{p}\right)  $ is $\mathfrak{c}$-spaceable for every
	$0<p<1$.
\end{theorem}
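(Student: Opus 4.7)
The plan is to take the closed linear span $V$ of the sequence $(u_k)_{k\geq 1}$ constructed in Section~2 and show that $V$ itself is simultaneously closed, of Hamel dimension $\mathfrak{c}$, and such that every nonzero element of $V$ is non $(r,s)$-summing for every $1\leq s\leq r<\infty$. The structural fact that drives everything is that the $u_k$ have pairwise \emph{disjointly supported} ranges: $u_k(\ell_p)\subseteq \ell_p^{(k)}$ and the $\mathbb{N}_k$ partition $\mathbb{N}$. Combined with \eqref{rr} and the $p$-additivity of the $\ell_p$ quasi-norm over disjoint supports, this yields, for any scalar sequence $(a_k)$ and any $x\in\ell_p$,
$$\Bigl\|\sum_{k}a_k u_k(x)\Bigr\|_{\ell_p}^{p} \;=\; \sum_{k}|a_k|^{p}\,\|u_k(x)\|_{\ell_p}^{p} \;=\; \|x\|_{\ell_p}^{p}\sum_{k}|a_k|^{p}.$$
Taking the supremum over $\|x\|_{\ell_p}\leq 1$, the linear map $\Phi:\ell_p\to\mathcal{L}(\ell_p;\ell_p)$, $(a_k)\mapsto \sum_{k}a_k u_k$, is an isometric embedding; the defining series converges since $\mathcal{L}(\ell_p;\ell_p)$ is a $p$-Banach space and the partial sums form a Cauchy net by the same identity.

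From this it is immediate that $V:=\Phi(\ell_p)$ is closed in $\mathcal{L}(\ell_p;\ell_p)$ (it is isometric to the complete space $\ell_p$) and linearly isomorphic to $\ell_p$. In particular $V$ is infinite-dimensional and has Hamel dimension $\mathfrak{c}$: the upper bound is cardinality, and the lower bound follows either by the standard Baire argument that an infinite-dimensional $F$-space cannot be a countable union of finite-dimensional (hence closed, nowhere dense) subspaces, or by writing down $\mathfrak{c}$ explicitly independent vectors (e.g.\ a Vandermonde-style family $(t^n/n!)_n\in\ell_p$ for $t\in\mathbb{R}$). So $V$ already supplies a closed $\mathfrak{c}$-dimensional subspace.

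It only remains to show that every nonzero $T=\sum_k a_k u_k \in V$ is non $(r,s)$-summing for each admissible $(r,s)$. Fix such a pair. By \thmref{t1} the identity $i:\ell_p\to\ell_p$ is non $(r,s)$-summing, so there exists $(x_j)\in\ell_s^{w}(\ell_p)$ with $\sum_j\|x_j\|_{\ell_p}^{r}=\infty$. Applying the displayed identity to each $x_j$ gives $\|T(x_j)\|_{\ell_p}=\|(a_k)\|_{\ell_p}\,\|x_j\|_{\ell_p}$, and therefore
$$\sum_{j}\|T(x_j)\|_{\ell_p}^{r} \;=\; \|(a_k)\|_{\ell_p}^{r}\sum_{j}\|x_j\|_{\ell_p}^{r} \;=\; \infty,$$
since $(a_k)\neq 0$. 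Hence $T\notin \prod_{(r,s)}(\ell_p;\ell_p)$ for every such pair, and $V\setminus\{0\}$ lies entirely inside the complement set of the statement.

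I do not expect a substantive obstacle. The entire argument pivots on the $p$-additivity identity above, which is precisely where the quasi-Banach (in fact $p$-Banach) nature of $\ell_p$ is used in an essential way; once that identity is in place, the closedness of $V$, the dimension count, and the transfer of Maddox's non-summability to every nonzero element of $V$ are all routine.
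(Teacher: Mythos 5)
Your proof is correct, and it takes a genuinely cleaner route than the paper at the decisive step. The paper defines the same map $T((a_i))=\sum_i a_i u_i$, cites its earlier work for well-definedness and injectivity, and then passes to the \emph{closure} $\overline{T(\ell_p)}$; it must then show that an arbitrary limit $\Psi=\lim_k T((a_i^{(k)}))$ is still non-summing, which forces it through delicate steps (interchanging the limit in $k$ with the infinite sum in $i$, extracting coordinatewise limits $\alpha_i$, arguing that $(\alpha_i)\neq 0$, and finally bounding $\sum_j\Vert\Psi(x^{(j)})\Vert^r$ from below by the single term $\vert\alpha_{i_0}\vert^r\sum_j\Vert u_{i_0}(x^{(j)})\Vert^r$ using disjointness of supports). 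Your observation that the disjoint supports give the exact identity $\Vert\sum_k a_k u_k(x)\Vert_{\ell_p}^p=\Vert x\Vert_{\ell_p}^p\sum_k\vert a_k\vert^p$, hence that $\Phi$ is an isometric embedding of the complete space $\ell_p$, makes $\Phi(\ell_p)$ closed outright and renders the entire closure analysis unnecessary; the same identity gives $\Vert T(x_j)\Vert_{\ell_p}=\Vert(a_k)\Vert_{\ell_p}\Vert x_j\Vert_{\ell_p}$, so non-summability of every nonzero element follows in one line from Maddox's theorem, whereas the paper only gets a one-term lower bound. Both arguments rest on the same ingredients (the disjointly supported $u_k$, $p$-additivity, and Theorem~\ref{t1}), but your isometry formulation buys closedness for free and avoids the limit-interchange steps, which are the least rigorous part of the paper's proof; the dimension count via injectivity of $\Phi$ and $\dim\ell_p=\mathfrak{c}$ is also fine.
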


\begin{proof}
	In fact, by Theorem \ref{t1} it follows that $\mathcal{L}\left(  \ell_{p};\ell_{p}\right)  \diagdown\bigcup\limits_{1\leq
		s\leq r<\infty}%
	{\textstyle\prod\nolimits_{\left(  r,s\right)  }}
	\left(  \ell_{p};\ell_{p}\right)  $ is non-empty. So, consider  the operator $T:\ell_{p}\longrightarrow\mathcal{L}\left(
	\ell_{p};\ell_{p}\right)  $ defined by
	\[
	T\left(  \left(  a_{i}\right)  _{i=1}^{\infty}\right)  =\sum\limits_{i=1}%
	^{\infty}a_{i}u_{i},
	\]
	with $u_{i}$ defined in the preliminaries.
	It follows from \cite[Lemma 2.1]{DanielT} that $T$ is \ well-defined, linear and injective.
	Moreover, using \cite[Theorem 3.1]{DanielT} we know that
	\[
	T\left(  \ell_{p}\diagdown\left\{  0\right\}  \right)  \subset\mathcal{L}%
	\left(  \ell_{p};\ell_{p}\right)  \diagdown\bigcup\limits_{1\leq s\leq
		r<\infty}%
	{\textstyle\prod\nolimits_{\left(  r,s\right)  }}
	\left(  \ell_{p};\ell_{p}\right)  .
	\]
	Therefore, $\overline{T\left(  \ell_{p}\right)  }$ is a closed
	infinite-dimensional subspace of $\mathcal{L}\left(  \ell_{p};\ell_{p}\right)
	$. We just have to show that%
	\[
	\overline{T\left(  \ell_{p}\right)  }\diagdown\left\{  0\right\}
	\subset\mathcal{L}\left(  \ell_{p};\ell_{p}\right)  \diagdown\bigcup
	\limits_{1\leq s\leq r<\infty}%
	{\textstyle\prod\nolimits_{\left(  r,s\right)  }}
	\left(  \ell_{p};\ell_{p}\right)  .
	\]
	Indeed, let $\Psi\in\overline{T\left(  \ell_{p}\right)  }\diagdown\left\{
	0\right\}  $. Then, there are sequences $\left(  a_{i}^{\left(  k\right)
	}\right)  _{i=1}^{\infty}\in\ell_{p}\diagdown\left\{  0\right\}  $ $\left(
	k\in\mathbb{N}\right)  $ such that
	\begin{equation}
	\Psi=\lim_{k\rightarrow\infty}T\left(  \left(  a_{i}^{\left(  k\right)
	}\right)  _{i=1}^{\infty}\right)  \text{in}\ \mathcal{L}\left( \label{ee} \ell_{p};\ell_{p}\right).
	\end{equation}
	Note that, for each $k\in\mathbb{N}$,%
	\[
	T\left(  \left(  a_{i}^{\left(  k\right)  }\right)  _{i=1}^{\infty}\right)
	=\sum\limits_{i=1}^{\infty}a_{i}^{\left(  k\right)  }u_{i}.
	\]
	Then, from  \eqref{ee} we have
	\[
	\Psi=\lim_{k\rightarrow\infty}\sum\limits_{i=1}^{\infty}a_{i}^{\left(
		k\right)  }u_{i}=\sum\limits_{i=1}^{\infty}\lim_{k\rightarrow\infty}%
	a_{i}^{\left(  k\right)  }u_{i}.
	\]
	In particular, for $x\in\ell_{p}$ arbitrary we get
	\[
	\Psi(x)=\lim_{k\rightarrow\infty}\sum\limits_{i=1}^{\infty}a_{i}^{\left(
		k\right)  }u_{i}(x)=\sum\limits_{i=1}^{\infty}\lim_{k\rightarrow\infty}%
	a_{i}^{\left(  k\right)  }u_{i}(x).
	\]
	Since convergence in $\ell_{p}$ implies coordinatewise convergence, it follows
	that
	\begin{equation}
	\lim_{k\rightarrow\infty}a_{i}^{\left(  k\right)  }=\alpha_{i}\ \text{for all}\ i. \label{tt}
	\end{equation}
	On the other hand, since each operator $u_{i}\in\mathcal{L}\left(  \ell
	_{p};\ell_{p}\right)  \diagdown\bigcup\limits_{1\leq s\leq r<\infty}%
	{\textstyle\prod\nolimits_{\left(  r,s\right)  }}
	\left(  \ell_{p};\ell_{p}\right)  $ (it suffices to use \eqref{rr}), for each $i\in\mathbb{N}$, there exist a
	sequence $\left(  x^{\left(  j\right)  }\right)  _{j=1}^{\infty}\in\ell
	_{s}^{w}\left(  \ell_{p}\right)  $ such that
	 $\left(  u_{i}(x^{\left(
		j\right)  }\right) ) _{j=1}^{\infty}\notin\ell_{r}\left(  \ell_{p}\right)  $,
	that is,
	\begin{equation}
	\sum\limits_{j=1}^{\infty}\left\vert \varphi\left(  x^{\left(  j\right)
	}\right)  \right\vert ^{s}<\infty\text{ and }\sum\limits_{j=1}^{\infty \label{e1}
}\left\Vert u_{i}\left(  x^{\left(  j\right)  }\right)  \right\Vert _{\ell
_{p}}^{r}=\infty\text{ },
\end{equation}
for each $\varphi\in\left(  \ell_{p}\right)  ^{\prime}=\ell_{\infty}$ because
$0<p<1$ (see \cite[Theorem 2.3]{book1}). So, using \eqref{tt} we get
\begin{align*}
\sum\limits_{j=1}^{\infty}\left\Vert \Psi\left(  x^{\left(  j\right)
}\right)  \right\Vert _{\ell_{p}}^{r}  & =\sum\limits_{j=1}^{\infty}\left\Vert
\lim_{k\rightarrow\infty}\sum\limits_{i=1}^{\infty}a_{i}^{\left(  k\right)
}u_{i}\left(  x^{\left(  j\right)  }\right)  \right\Vert _{\ell_{p}}^{r}\\
& =\sum\limits_{j=1}^{\infty}\left\Vert \sum\limits_{i=1}^{\infty}%
\lim_{k\rightarrow\infty}a_{i}^{\left(  k\right)  }u_{i}\left(  x^{\left(
	j\right)  }\right)  \right\Vert _{\ell_{p}}^{r}\\
& =\sum\limits_{j=1}^{\infty}\left\Vert \sum\limits_{i=1}^{\infty}\alpha
_{i}.u_{i}\left(  x^{\left(  j\right)  }\right)  \right\Vert _{\ell_{p}}^{r}.
\end{align*}
 Since $\left(  \alpha_{i}\right)  _{i}\neq0$ (it follows from the use of the  $p$-norm in \eqref{tt}), let $i_{0}$ be such
that $\alpha_{i_{0}}\neq0$. Since the supports of the operators $u_{i}$ are pairwise disjoint for all $i$, from \eqref{e1} we have
\begin{align*}
\sum\limits_{j=1}^{\infty}\left\Vert \Psi\left(  x^{\left(  j\right)
}\right)  \right\Vert _{\ell_{p}}^{r}  & =\sum\limits_{j=1}^{\infty}\left\Vert
\sum\limits_{i=1}^{\infty}\alpha_{i}.u_{i}\left(  x^{\left(  j\right)
}\right)  \right\Vert _{\ell_{p}}^{r}\\
& \geq\sum\limits_{j=1}^{\infty}\left\Vert \alpha_{i_{0}}.u_{i_{0}}\left(  x^{\left(
	j\right)  }\right)  \right\Vert _{\ell_{p}}^{r}\\
& =\left\vert \alpha_{i_{0}}\right\vert ^{r}.\sum\limits_{j=1}^{\infty}\left\Vert
u_{i_{0}}\left(  x^{\left(  j\right)  }\right)  \right\Vert _{\ell_{p}}^{r}=\infty
\end{align*}
and thus
\[
\sum\limits_{j=1}^{\infty}\left\Vert \Psi\left(  x^{\left(  j\right)
}\right)  \right\Vert _{\ell_{p}}^{r}=\infty.
\]
We conclude that $\Psi\in\mathcal{L}\left(  \ell_{p};\ell_{p}\right)
\diagdown\bigcup\limits_{1\leq s\leq r<\infty}%
{\textstyle\prod\nolimits_{\left(  r,s\right)  }}
\left(  \ell_{p};\ell_{p}\right)  $. Hence,%
\[
\overline{T\left(  \ell_{p}\right)  }\diagdown\left\{  0\right\}
\subset\mathcal{L}\left(  \ell_{p};\ell_{p}\right)  \diagdown\bigcup
\limits_{1\leq s\leq r<\infty}%
{\textstyle\prod\nolimits_{\left(  r,s\right)  }}
\left(  \ell_{p};\ell_{p}\right)
\]
finishing the proof.
\end{proof}

\bibliographystyle{amsplain}

\end{document}